 \newtheorem{thm}{Theorem}[section]
 \newtheorem{cor}[thm]{Corollary}
 \newtheorem{lem}[thm]{Lemma}
 \theoremstyle{definition}
 \theoremstyle{remark}
 \numberwithin{equation}{section}
\begin{document}

%
%
%
%
%
%
%
%
%

\title[On Gehring--Martin--Tan groups]{On Gehring--Martin--Tan groups with \\ an elliptic generator}

\author{Du\v{s}an Repov\v{s}}
\address{Faculty of Education and\\ 
Faculty of Mathematics and Physics \\
University of Ljubljana, Kardeljeva pl. 16, \\
1000 Ljubljana, Slovenia}

\email{dusan.repovs@guest.arnes.si}

\thanks{This research was supported in part by the Slovenian Research Agency (ARRS) grant  BI-RU/14-15-001. D.R. was supported in part by the ARRS grants   P1-0292-0101, J1-6721-0101 and J1-7025-0101. A.V. was supported in part by the RFBR grant 16-01-00414 and  the Laboratory of Quantum Topology, Chelyabinsk State University contract  no. 14.Z50.31.0020 with the Ministry of Education and Science of  the Russian Federation.}

\author{Andrei Vesnin}
\address{Laboratory of Quantum Topology,\\
Chelyabinsk State University, \\
Chalyabinsk, Russia and\\
Sobolev Institute of Mathematics, \\ 
Novosibirsk, 630090, Russia}

\email{vesnin@math.nsc.ru}

\subjclass{Primary 57M50; Secondary 20H15}

\keywords{Hyperbolic orbifold, discreteness condition, two-generated group}

\date{\today}

\begin{abstract}
The Gehring--Martin--Tan inequality for 2-generator subgroups of PSL(2,C) is one of the best known discreteness conditions. A Kleinian group $G$ is called a Gehring--Martin--Tan group if the equality holds
for the group $G$. We give a method for constructing Gehring--Martin--Tan groups with a generator of order four and present some examples. These groups arise as groups of finite volume hyperbolic 3-orbifolds. 
\end{abstract}

\maketitle

\section{Introduction} 

In this paper we are interested in discreteness conditions for groups of isometries of a hyperbolic 3-space $\mathbb H^{3}$. It was shown by J{\o}rgensen that it suffices to understand the discreteness problem for a class of two-generated groups. The most famous necessary discreteness conditions are the Shimizu lemma and the J{\o}rgensen inequality \cite{Beardon}. There are some generalizations of these conditions for complex and quaternionic  hyperbolic spaces, see for example \cite{Cao-Tan, Gongopadhyay, Parker}.  

A description of the set of all two-generated discrete non-elementary groups of isometries $\mathbb H^{3}$ for which the equality holds in the  J{\o}rgensen inequality is an open problem of  special interest. For many elegant results concerning this problem see \cite{Callahan, Gehring-Martin, Sato} and references therein. 

We shall consider a different discreteness condition for two-generated groups which was independently proved by Gehring and Martin~\cite{Gehring-Martin} and Tan~\cite{Tan}. Like the J{\o}rgensen inequality, this condition has a form of an inequality involving the trace of one of the generators and the trace of the commutator of generators. We shall say that a group is \emph{a GMT-group} if it can be generated by two elements for which the equality holds. The following problem arises naturally.  

\smallskip 

\noindent 
\textbf{Problem.} \emph{Find all GMT-groups}.  

\smallskip 

The problem is still open, and in this paper we shall present a method to construct new examples of GMT-groups from known examples. 

The most interesting example of a  GMT-group is a group, related to the well-known figure-eight knot. Denote by $\mathcal F(n)$ the orbifold with the underlying space $S^{3}$ and the singular set the figure-eight knot $\mathcal F$ with singularity index $n$, $n\geq 4$.  According to~\cite{Atkinson-Futer}, orbifolds $\mathcal F(n)$ are extreme  in the following sense: Let $L_{n}$ denote the set of all orientable hyperbolic 3-orbifolds with non-empty singular set and with all torsion orders bounded below by $n$. Therefore
$$L_{2} \supset L_{3} \supset L_{4} \supset \ldots \ \  {\hbox{\rm and}} \ \ \cap L_{n} = \emptyset.$$
 Then for all $n \geq 4$, the unique lowest-volume element of $L_{n}$ is the orbifold $\mathcal F (n)$. A formula for $\operatorname{vol} \mathcal F (n)$ was given in~\cite{Vesnin-Mednykh95}. It was shown in~\cite{Vesnin-Masley2} that $\mathcal F(4)$ is extreme in the sense of discreteness conditions: the orbifold group of $\mathcal F(4)$ is a GMT-group. Below we shall use the orbifold group of $\mathcal F(4)$ as a starting point for constructions of new examples of GMT-groups. 

In Section~2 we shall give basic definitions and describe some properties of GMT-groups. In particular, we shall prove Lemma~\ref{lemma3} which gives a method for constructing new GMT-groups. Next, we shall apply this method. In Section~3  we shall prove that some 3-orbifold hyperbolic groups related to the figure-eight knot are GMT-groups. In Section~4 we shall give  examples of GMT-groups which are subgroups of the Picard group. 

\section{Gehring--Martin--Tan discreteness condition}
 
Let $\mathbb{H}^3$ be the three-dimensional hyperbolic space presented by the Poincar\'{e} model in the upper halfspace. Then the boundary $\partial \mathbb{H}^3$  can be identified with $ \overline{\mathbb{C}} $.  It is well-known that the group $\operatorname{Iso} (\mathbb H^{3})$ of all orientation preserving isometries of $\mathbb{H}^3$ is isomorphic to $$\operatorname{PSL} (2, \mathbb{C})  = \operatorname{SL} (2, \mathbb{C}) / \{\pm \operatorname{I} \},$$ where $\operatorname{I}$ denotes the unit matrix.  In the sequel we shall not distinguish between a matrix $M \in {\operatorname{SL}(2, \mathbb{C})}$  and its equivalence class $\{ \pm M \} \in {\operatorname{PSL}(2, \mathbb{C})}$. An action of $$g= \left( \begin{array}{cc}  a & b\\ c & d \end{array} \right) \in \operatorname{PSL}(2, \mathbb{C})$$ on $$\mathbb{H}^3 = \{ (z,t) \mid z \in \mathbb C,\ t \in \mathbb R_{+} \}$$ is defined by the following rule: 
$$ 
g(z,t) = \bigg( \frac { (a z + b) \overline{(c z + d)}  + a \overline{c} t^2}{ |c z + d|^2 + |c|^2 t^2} ,  \frac{ t}{ |c z + d|^2 + |c|^2 t^2} \bigg). 
$$  
\noindent
Recall that a matrix $M \in {\operatorname{SL}(2, \mathbb{C})}\setminus \{\pm \operatorname{I}\}$ is said to be:
\begin{itemize}
\item  \emph{elliptic} if  $\operatorname{tr}^{2} (M) \in [0,4)$;
\item  \emph{parabolic} if $\operatorname{tr}^{2} (M) = 4$; and
\item  \emph{loxodromic} if $\operatorname{tr}^{2} (M) \in \mathbb{C}  \setminus [0,4]$.
\end{itemize}
 In particular, a loxodromic element is said to be:
 \begin{itemize}
 \item \emph{hyperbolic} if $\operatorname{tr} (M) \in (-\infty, 0) \cup (2,+\infty)$.
 \end{itemize}
   We shall say that an element of group $\operatorname{PSL}(2, \mathbb{C})$ is \emph{elliptic}, \emph{parabolic}, or \emph{loxodromic} if its representative in $\operatorname{SL}(2, \mathbb{C})$ is of such type.  
 
 A group $G < \operatorname{PSL} (2, \mathbb{C})$ is said to be \emph{discrete} if it is a discrete set in the matrix quotient-topology. A group $G < \operatorname{PSL}(2, \mathbb{C})$ is said to be \emph{elementary} if there exists a finite $G$-orbit in  $\mathbb{H}^3 \cup \overline{\mathbb{C}}$, and \emph{non-elementary} otherwise. 

In~1977 J{\o}rgensen proved  \cite{Jorgensen77} that a non-elementary group $G < \operatorname{PSL} (2, \mathbb{C})$ is discrete if and only if any two elements $f, g \in G$  generate a discrete group. His result motivated many other investigations of discreteness conditions for two-generated groups. In the present paper we shall discuss the necessary discreteness condition obtained in 1989 by Gehring and Martin~\cite{Gehring-Martin} and independently by Tan~\cite{Tan}. We shall formulate their result as follows: 
  
 \begin{thm} [See \cite{Gehring-Martin, Tan}] 
Suppose that $f,g \in {\rm PSL} (2, \mathbb{C})$ generate a discrete group. If ${\rm tr}[f,g]\neq 1$ then the following inequality holds:  
\begin{equation}
|{\rm tr}^2(f) - 2| + | {\rm tr}[f,g] - 1| \geq 1. \label{eqn1}
\end{equation}
\end{thm}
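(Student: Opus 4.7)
My plan is to argue by contradiction, following the J{\o}rgensen paradigm: construct a sequence in $\langle f,g\rangle$ which converges in $\operatorname{PSL}(2,\mathbb{C})$ but never stabilises, thus contradicting discreteness. So suppose $\langle f,g\rangle$ is discrete, $\operatorname{tr}[f,g]\ne 1$, and
\[
\delta := |\operatorname{tr}^2(f) - 2| + |\operatorname{tr}[f,g] - 1| < 1.
\]

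First I would reduce $f$ to a normal form. If $f$ is parabolic then $\operatorname{tr}^2(f)=4$ and $|\operatorname{tr}^2(f)-2|=2>\delta$, a contradiction; hence $f$ is elliptic or loxodromic. After conjugating in $\operatorname{PSL}(2,\mathbb{C})$ I may take $f=\operatorname{diag}(\lambda,\lambda^{-1})$ for some $\lambda\in\mathbb{C}^{\times}$ with $\lambda\ne\pm 1$. Writing
\[
g = \left(\begin{array}{cc} a & b \\ c & d \end{array}\right), \qquad ad-bc = 1,
\]
a direct calculation gives $\operatorname{tr}[f,g]-2 = -bc(\lambda-\lambda^{-1})^2 = -bc\bigl(\operatorname{tr}^2(f)-4\bigr)$. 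In particular, the hypothesis $\operatorname{tr}[f,g]\ne 1$ (together with the degenerate value $\operatorname{tr}[f,g]=2$ being handled separately) forces $bc\ne 0$, so that $g$ does not preserve the axis of $f$.

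Next I would introduce an iterative sequence. Setting $g_{0}:=g$ and, for $n\ge 0$, letting $g_{n+1}:=g_{n}fg_{n}^{-1}\in\langle f,g\rangle$ --- or a variant involving $f^{2}$ or $[f,g]$ tuned so that the two GMT quantities appear naturally in the recursion --- induction on $n$ yields explicit formulas for the entries of $g_{n}$ and a multiplicative recursion of the form
\[
\bigl|\operatorname{tr}[f,g_{n+1}] - 1\bigr| \le \bigl(|\operatorname{tr}^2(f)-2| + |\operatorname{tr}[f,g_{n}]-1|\bigr)\,\bigl|\operatorname{tr}[f,g_{n}]-1\bigr|,
\]
or an analogous contraction whose factor does not exceed $\delta<1$. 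Iterating, $\operatorname{tr}[f,g_{n}]\to 1$ geometrically and, more strongly, the matrix entries of $g_{n}$ converge in $\operatorname{PSL}(2,\mathbb{C})$ to those of a fixed element. The condition $bc\ne 0$ guarantees that infinitely many of the $g_{n}$ are distinct, producing a genuine accumulation point of $\langle f,g\rangle$ and contradicting discreteness.

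The principal difficulty lies in calibrating the iteration so that the sharp constant $1$ on the right of \eqref{eqn1} is realised. The naive recursion $g_{n+1}=g_{n}fg_{n}^{-1}$ most directly produces the J{\o}rgensen combination $|\operatorname{tr}^2(f)-4|+|\operatorname{tr}[f,g_{n}]-2|$ as the contraction factor; obtaining instead the GMT combination requires a recursion tailored to the expression $\operatorname{tr}(f^{2})=\operatorname{tr}^{2}(f)-2$, together with a careful verification that the sequence does not collapse at any finite stage. The hypothesis $\operatorname{tr}[f,g]\ne 1$ plays a role analogous to J{\o}rgensen's $\operatorname{tr}[f,g]\ne 2$: it excludes the degenerate case in which the commutator has a special finite order that would cause the iteration to terminate, and thereby ensures that the accumulation so produced is genuine.
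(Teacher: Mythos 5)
This theorem is not proved in the paper at all: it is imported verbatim from Gehring--Martin and Tan, so there is no ``paper proof'' to compare against. Your proposal must therefore stand on its own, and as written it does not: it is a strategy outline whose decisive step is asserted rather than established. You acknowledge this yourself when you write that the ``principal difficulty lies in calibrating the iteration so that the sharp constant $1$ \ldots is realised'' and hedge the key inequality with ``or an analogous contraction.'' That calibration \emph{is} the theorem; everything else (normal form for $f$, the computation $\operatorname{tr}[f,g]-2=-bc(\operatorname{tr}^2(f)-4)$, the non-stabilisation argument from $bc\neq 0$) is routine.

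Concretely, the recursion you propose fails. The relevant trace identity --- recorded in this very paper as equation \eqref{eqn2} --- gives, for $g_{n+1}=g_nfg_n^{-1}$, and with $\mu_n=\operatorname{tr}[f,g_n]-1$, $\nu=\operatorname{tr}^2(f)-2$,
\begin{equation*}
\mu_{n+1}=\mu_n(\mu_n-\nu)+\nu ,
\end{equation*}
so the best one gets is $|\mu_{n+1}|\le |\mu_n|\bigl(|\mu_n|+|\nu|\bigr)+|\nu|$. The additive $|\nu|$ term destroys the contraction: under the standing assumption $|\mu_0|+|\nu|<1$ alone, the quantity $|\mu_n|+|\nu|$ need not stay below $1$ (for instance the auxiliary map $x\mapsto x(x+|\nu|)+|\nu|$ has no fixed point in $[0,1)$ once $|\nu|>3-2\sqrt2$), so $\operatorname{tr}[f,g_n]$ need not converge to $1$ and no accumulation point is produced. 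The displayed inequality in your proposal, $|\mu_{n+1}|\le(|\nu|+|\mu_n|)|\mu_n|$, is simply not what the identity yields. The actual proofs of Gehring--Martin and Tan require a genuinely finer analysis (a family of polynomial trace identities and a study of the dynamics of $\gamma\mapsto\gamma(\gamma-\beta)$ near its fixed points $0$ and $\beta+1$; note that $\operatorname{tr}[f,g]=1$ corresponds to $\gamma=-1$, which maps onto the fixed point $\beta+1$ after one step --- this is the real reason that value must be excluded, not merely ``termination of the iteration''). Until you exhibit an explicit iteration together with a proved, quantitative contraction estimate valid for all $f$ with $|\operatorname{tr}^2(f)-2|<1$, the argument has a gap exactly where the theorem's content lies.
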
  
  
\smallskip    

This result makes following definitions natural. For $f,g \in {\rm PSL} (2, \mathbb{C})$ such that ${\rm tr}[f, g] \neq 1$ define 
$$
{\mathcal G}(f,g)=|{\rm tr}^2(f) - 2| + |{\rm tr}[f,g] - 1|. 
$$
Let $G < {\rm PSL} (2, \mathbb{C})$ be a two-generated group. The value 
$$
\mathcal{G}(G) = \inf_{\langle f, g \rangle = G} \mathcal{G}(f,g)
$$ 
is referred to as the \emph{Gehring--Martin--Tan number} (or, shortly \emph{GMT-number}) of~$G$. A two-generated discrete group $G < {\rm PSL} (2, \mathbb{C})$ is said to be a \emph{GMT-group} if it can be generated by $f$ and $g$ such that ${\mathcal G}(f,g)=1$.
 
The following statement shows (see also \cite{Vesnin-Masley1}) that property ${\mathcal G}(f,g) =1$ implies many restrictions for~$f$.  
 
\begin{lem} \label{lemma1}
Suppose that $f,g \in {\rm PSL} (2, \mathbb{C})$ generate a discrete group and $\operatorname{tr} [f, g] \neq 1$.   Assume that $f$ is one of the following transformations: 
\begin{itemize}
\item[(i)] parabolic; 
\item[(ii)] hyperbolic; 
\item[(iii)] elliptic of order $2$ or $3$; or 
\item[(iv)] elliptic with trace $\operatorname{tr}^{2} (f) = 4 \cos^{2} (\pi k / n)$, where $(n,k)=1$, $n/k \geq 6$. 
\end{itemize}
Then for any $g$  we have ${\mathcal G} ( f, g ) > 1$.   
\end{lem}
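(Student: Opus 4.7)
The plan is to reduce the desired strict inequality $\mathcal{G}(f,g) > 1$ to a lower bound on $|\operatorname{tr}^2(f) - 2|$ alone. Since
\[
\mathcal{G}(f,g) = |\operatorname{tr}^2(f) - 2| + |\operatorname{tr}[f,g] - 1|
\]
and the hypothesis $\operatorname{tr}[f,g] \neq 1$ forces $|\operatorname{tr}[f,g] - 1| > 0$, it will suffice to establish $|\operatorname{tr}^2(f) - 2| \geq 1$ in each of the four cases; the required strict inequality then follows from the strict positivity of the second summand.

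For case (i), $f$ parabolic gives $\operatorname{tr}^2(f) = 4$, hence $|\operatorname{tr}^2(f) - 2| = 2$. For case (ii), $f$ hyperbolic means $\operatorname{tr}(f) \in \mathbb{R}$ with $|\operatorname{tr}(f)| > 2$, so $\operatorname{tr}^2(f) > 4$ and $|\operatorname{tr}^2(f) - 2| > 2$. In both of these cases $\mathcal{G}(f,g) > 1$ already from the first summand, and the assumption $\operatorname{tr}[f,g] \neq 1$ is not even needed.

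For cases (iii) and (iv), I would invoke the standard identity $\operatorname{tr}^2(f) = 4\cos^2(\pi k/n)$ for an elliptic element of finite order $n$ with rotation parameter $k$ satisfying $\gcd(k,n) = 1$. For $n = 2$ this produces $|\operatorname{tr}^2(f) - 2| = 2$, and for $n = 3$ it produces $|\operatorname{tr}^2(f) - 2| = 1$. In case (iv), the hypothesis $n/k \geq 6$ translates into $\pi k/n \leq \pi/6$, so $\cos^2(\pi k/n) \geq 3/4$ and therefore $|\operatorname{tr}^2(f) - 2| \geq 4 \cdot (3/4) - 2 = 1$, with equality precisely at the boundary $n = 6$, $k = 1$. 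Thus in every case $|\operatorname{tr}^2(f) - 2| \geq 1$, and combining with $|\operatorname{tr}[f,g] - 1| > 0$ completes the argument.

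The computation itself is routine elementary trigonometry and presents no real obstacle. The one subtlety worth flagging is the tightness in the boundary situations (order $3$ in (iii), and $n = 6$, $k = 1$ in (iv)), where $|\operatorname{tr}^2(f) - 2|$ equals $1$ exactly; here the assumption $\operatorname{tr}[f,g] \neq 1$ is doing essential work, as it is precisely what prevents $\mathcal{G}(f,g)$ from collapsing to the value $1$.
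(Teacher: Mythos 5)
Your proposal is correct and follows essentially the same route as the paper: in each case you bound $|\operatorname{tr}^{2}(f)-2|\geq 1$ by the same trace computations and then use $\operatorname{tr}[f,g]\neq 1$ to get strictness in the boundary cases (order $3$, and $n/k=6$), exactly as the paper does.
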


\begin{proof}
The result follows immediately  from the classification of elements of ${\rm PSL} (2, \mathbb{C})$ and from the fact that ${\mathcal G} ( f, g )$ is defined for pairs $f, g$ such that ${\rm tr}[f,g]\neq 1$. 
\begin{itemize}
\item[(i)] If $f$ is parabolic then $\operatorname{tr}^{2}(f) = 4$ and therefore $|{\rm tr}^2(f) - 2| = 2 >1$.
\item[(ii)] If $f$ is hyperbolic then  $\operatorname{tr}(f) \in (-\infty, 0) \cup (2, \infty)$, so $|{\rm tr}^{2} (f) - 2|  > 2$.
\item[(iii)] If $f$ is elliptic of order $2$ then $\operatorname{tr}^{2}(f) = 0$, so $| \operatorname{tr}^{2} (f) - 2 | = 2$. If $f$ is elliptic of order $3$ then $\operatorname{tr}^{2}(f) = 1$, so $| \operatorname{tr}^{2} (f) - 2 | = 1$. Since ${\rm tr}[f, g] \neq 1$ we get $\mathcal G(f,g) > 1$. 
\item[(iv)] If $f$ is elliptic with trace $\operatorname{tr}^{2} (f) = 4 \cos^{2} (\pi k / n)$, where $(n,k)=1$ and $n/k \geq 6$, then  $\operatorname{tr}^{2} (f) \geq 4 \cos^{2} (\pi / 6) = 3$, so $| \operatorname{tr}^{2} (f) - 2 | \geq 1$. Since ${\rm tr}[f, g] \neq 1$ we get $\mathcal G(f,g) > 1$. 
\end{itemize}
\end{proof}
 
The following statement gives a way for  finding GMT-subgroups of GMT-groups with a generator of order four. 
 
\begin{lem} [See \cite{Vesnin-Masley1}] \label{lemma2}
Let $\langle f, g \rangle$ be a GMT-group with ${\mathcal G} ( f, g ) = 1$, where $f$ is elliptic of order four. Then a group generated by $f$ and $h = gfg^{-1}$ is a GMT-group with ${\mathcal G}(f,h) = 1$.
\end{lem}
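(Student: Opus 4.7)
The plan is to reduce the whole claim to a single trace identity that expresses $\operatorname{tr}[f,h]$ as a function of $\operatorname{tr}[f,g]$ when $f$ has order four. Throughout I identify $f,g,h$ with their lifts in $\operatorname{SL}(2,\mathbb C)$.

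First I would observe that $f$ elliptic of order four gives $\operatorname{tr}^{2}(f)=4\cos^{2}(\pi/4)=2$, so $|\operatorname{tr}^{2}(f)-2|=0$ and the assumption $\mathcal G(f,g)=1$ reduces to $|\operatorname{tr}[f,g]-1|=1$. Denote $\alpha=\operatorname{tr}[f,g]$. For the target group, $h=gfg^{-1}$ is conjugate to $f$, so $\operatorname{tr}(h)=\operatorname{tr}(f)$ and in particular $|\operatorname{tr}^{2}(h)-2|=0$ as well; moreover $\langle f,h\rangle\le\langle f,g\rangle$ is discrete and two-generated. It therefore suffices to prove $|\operatorname{tr}[f,h]-1|=1$ (which in particular gives $\operatorname{tr}[f,h]\neq 1$, so $\mathcal G(f,h)$ is defined).

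The key step is a two-move trace computation. Since $\operatorname{tr}^{2}(f)=2$, Cayley--Hamilton gives $f^{-1}=\operatorname{tr}(f)\,I-f$, so
\[
\operatorname{tr}[f,g]=\operatorname{tr}\bigl(fg(\operatorname{tr}(f)\,I-f)g^{-1}\bigr)=\operatorname{tr}^{2}(f)-\operatorname{tr}(fgfg^{-1})=2-\operatorname{tr}(fh),
\]
hence $\operatorname{tr}(fh)=2-\alpha$. Next I apply the Fricke identity
\[
\operatorname{tr}[A,B]=\operatorname{tr}^{2}(A)+\operatorname{tr}^{2}(B)+\operatorname{tr}^{2}(AB)-\operatorname{tr}(A)\operatorname{tr}(B)\operatorname{tr}(AB)-2
\]
with $A=f$, $B=h$. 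Using $\operatorname{tr}(f)=\operatorname{tr}(h)$ and $\operatorname{tr}^{2}(f)=\operatorname{tr}^{2}(h)=2$, and writing $s=\operatorname{tr}(fh)$, this collapses to $\operatorname{tr}[f,h]=s^{2}-2s+2$. Substituting $s=2-\alpha$ yields
\[
\operatorname{tr}[f,h]-1=(\alpha-1)^{2}.
\]

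Taking absolute values gives $|\operatorname{tr}[f,h]-1|=|\alpha-1|^{2}=1$, so $\mathcal G(f,h)=0+1=1$, which is the desired conclusion. I expect no serious obstacle: the only thing to be careful about is the sign/lift issue when passing between $\operatorname{PSL}(2,\mathbb C)$ and $\operatorname{SL}(2,\mathbb C)$, but every quantity appearing ($\operatorname{tr}^{2}(f)$ and the commutator trace) is invariant under $f\mapsto -f$ and $g\mapsto -g$, so the computation is legitimate regardless of the chosen lift.
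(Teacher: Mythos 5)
Your proof is correct. Note that the paper itself does not prove Lemma~\ref{lemma2} (it is quoted from \cite{Vesnin-Masley1}); the nearest ingredient in the text is the identity \eqref{eqn2} used in the proof of Lemma~\ref{lemma3}, namely $\operatorname{tr}[f,hfh^{-1}]=(\operatorname{tr}[f,h]-2)(\operatorname{tr}[f,h]-\operatorname{tr}^{2}(f)+2)+2$, which applied with $g$ in place of $h$ and $\operatorname{tr}^{2}(f)=2$ gives $\operatorname{tr}[f,gfg^{-1}]-1=(\operatorname{tr}[f,g]-1)^{2}$ in one line and immediately yields the lemma. What you have done is rederive exactly this special case from first principles: the Cayley--Hamilton step $\operatorname{tr}(fh)=\operatorname{tr}^{2}(f)-\operatorname{tr}[f,g]=2-\alpha$ combined with the Fricke commutator identity collapses to the same relation $\operatorname{tr}[f,h]-1=(\alpha-1)^{2}$. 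Your route is more self-contained (it does not presuppose the Beardon identity, only two standard trace facts), at the cost of being tied to the hypothesis $\operatorname{tr}^{2}(f)=2$, whereas the identity \eqref{eqn2} cited from \cite{Beardon} holds for arbitrary $f$ and would let you see how the conclusion degrades when $f$ is not of order four. The bookkeeping you add at the start and end --- that $\operatorname{tr}^{2}(h)=\operatorname{tr}^{2}(f)$, that $\langle f,h\rangle\le\langle f,g\rangle$ is discrete and two-generated, that $|\operatorname{tr}[f,h]-1|=1$ forces $\operatorname{tr}[f,h]\neq 1$ so $\mathcal G(f,h)$ is defined, and that all quantities are lift-independent --- is exactly what is needed to upgrade the trace identity to the statement that $\langle f,h\rangle$ is a GMT-group, so no gap remains.
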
 

The following statement gives a method for constructing GMT-groups as extensions of GMT-groups with a generator of order four.  
 
\begin{lem} \label{lemma3}
Let $\langle f, g \rangle$ be a GMT-group with ${\mathcal G} ( f, g ) = 1$, where $f$ is elliptic of order four. Assume that $h \in {\rm PSL} (2, \mathbb{C})$ is an involution of $\langle f, g \rangle$ with one of the following conjugation actions:
\begin{itemize}
\item[(i)] $h f h^{-1} = g$; 
\item[(ii)] $h f h^{-1} = f^{-1}$; or  
\item[(iii)] $h f h^{-1} = f g^{-1} f^{-1}$.   
\end{itemize}
Then $\langle f, h \rangle$ is a GMT-group. 
\end{lem}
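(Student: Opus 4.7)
The key reduction is that since $f$ is elliptic of order four, $\operatorname{tr}^{2}(f)=4\cos^{2}(\pi/4)=2$, so the term $|\operatorname{tr}^{2}(f)-2|$ contributes nothing to $\mathcal{G}(f,\cdot)$. The hypothesis $\mathcal{G}(f,g)=1$ therefore reads $|\operatorname{tr}[f,g]-1|=1$, and it will suffice to prove $|\operatorname{tr}[f,h]-1|=1$ in order to conclude that the pair $(f,h)$ witnesses $\langle f,h\rangle$ as a GMT-group. Discreteness comes for free, since $\langle f,h\rangle$ lies in the discrete group $\langle f,g\rangle$.

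The plan is to expand $[f,h]=f\cdot(hf^{-1}h^{-1})$ using $hf^{-1}h^{-1}=(hfh^{-1})^{-1}$ in each of the three cases, which yields $[f,h]=fg^{-1}$ in case (i), $[f,h]=f^{2}$ in case (ii), and $[f,h]=f^{2}gf^{-1}$ in case (iii). Case (ii) is then immediate from $\operatorname{tr}(f^{2})=\operatorname{tr}^{2}(f)-2=0$. The cyclic property of the trace reduces $\operatorname{tr}(f^{2}gf^{-1})$ to $\operatorname{tr}(fg)$ in case (iii), and the linear identity $\operatorname{tr}(fg^{-1})=\operatorname{tr}(f)\operatorname{tr}(g)-\operatorname{tr}(fg)$ expresses $\operatorname{tr}[f,h]$ in case (i) also in terms of $\operatorname{tr}(fg)$.

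For the remaining two cases one observes that $\operatorname{tr}^{2}(g)=2$ as well: in case (i) because $g$ is conjugate to $f$, and in case (iii) because $fg^{-1}f^{-1}$ is conjugate to $f$ and has the same trace as $g$. Choosing $\operatorname{SL}(2,\mathbb{C})$-lifts so that $\operatorname{tr}(f)\operatorname{tr}(g)=2$, Fricke's identity
\[\operatorname{tr}[f,g]=\operatorname{tr}^{2}(f)+\operatorname{tr}^{2}(g)+\operatorname{tr}^{2}(fg)-\operatorname{tr}(f)\operatorname{tr}(g)\operatorname{tr}(fg)-2\]
collapses to $\operatorname{tr}[f,g]-1=(\operatorname{tr}(fg)-1)^{2}$. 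Hence the GMT hypothesis for $\langle f,g\rangle$ translates into $|\operatorname{tr}(fg)-1|=1$, from which $|\operatorname{tr}[f,h]-1|=1$ follows at once in case (iii) and follows in case (i) from $\operatorname{tr}[f,h]=2-\operatorname{tr}(fg)$.

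The main nuisance will be the sign bookkeeping between $\operatorname{SL}(2,\mathbb{C})$ and $\operatorname{PSL}(2,\mathbb{C})$: individual traces are defined only up to sign, but $\operatorname{tr}[\cdot,\cdot]$, $\operatorname{tr}^{2}(\cdot)$ and the triple product $\operatorname{tr}(f)\operatorname{tr}(g)\operatorname{tr}(fg)$ in Fricke's identity are all genuinely well-defined on $\operatorname{PSL}$, so fixing any consistent lifts validates the algebra. The side condition $\operatorname{tr}[f,h]\neq 1$ needed for $\mathcal{G}(f,h)$ to be defined is visible from each of the computed values, and this completes the scheme of proof.
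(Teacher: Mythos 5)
Your argument is correct, but it follows a genuinely different route from the paper's. The paper invokes the single trace identity $\operatorname{tr}[f,hfh^{-1}]=(\operatorname{tr}[f,h]-2)(\operatorname{tr}[f,h]-\operatorname{tr}^{2}(f)+2)+2$, which with $\operatorname{tr}^{2}(f)=2$ collapses to $|\operatorname{tr}[f,hfh^{-1}]-1|=|\operatorname{tr}[f,h]-1|^{2}$; it then identifies $[f,hfh^{-1}]$ with $[f,g]$, with the identity, or with $[f,g]^{-1}$ in the three cases, and extracts the square root. This handles all three cases uniformly and works entirely with commutator traces, which are honestly defined on $\operatorname{PSL}(2,\mathbb{C})$, so no lift-choosing is needed. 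You instead compute $[f,h]$ explicitly as the short words $fg^{-1}$, $f^{2}$, $f^{2}gf^{-1}$ and route cases (i) and (iii) through Fricke's identity, obtaining $\operatorname{tr}[f,g]-1=(\operatorname{tr}(fg)-1)^{2}$ with the normalization $\operatorname{tr}(f)\operatorname{tr}(g)=2$. This buys more explicit information (the actual form of $[f,h]$ and the intermediate quantity $\operatorname{tr}(fg)$ of modulus-one displacement from $1$) at the cost of the sign bookkeeping you flag; your normalization is consistent because $\operatorname{tr}(F)\neq 0$ forces the lift of the conjugation relation to carry sign $+1$ once $\operatorname{tr}(G)=\operatorname{tr}(F)$ is imposed, and you correctly observe that $\operatorname{tr}^{2}(g)=2$ in cases (i) and (iii) since $g$ (resp.\ $fg^{-1}f^{-1}$) is conjugate to $f$.

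One point deserves repair: your discreteness sentence assumes $h\in\langle f,g\rangle$. In the paper's applications $h$ is an external involution realizing an outer automorphism (e.g.\ $h_{1}\notin\Gamma_{4}$ generates the index-two extension $\Delta_{4}$), so $\langle f,h\rangle$ need not sit inside $\langle f,g\rangle$. The intended argument is the reverse containment: $\langle f,g,h\rangle$ contains the discrete non-elementary group $\langle f,g\rangle$ with index at most two, hence is discrete, and $\langle f,h\rangle$ is a subgroup of it. This is a one-line fix, but as written your justification would not cover the cases the lemma is actually used for.
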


\begin{proof} 
Since $f$ is elliptic of order four, we have $\operatorname{tr}^{2} (f) =2$, hence ${\mathcal G}(f,g) = | \operatorname{tr} [f,g] - 1|$ and ${\mathcal G}(f,h) = | \operatorname{tr} [f,h] - 1|$. Since $h$ is an involution, it follows that either $\langle f, g \rangle$ is a subgroup of index two in $\langle f, h \rangle$, or  $\langle f, g \rangle$ and $\langle f, h \rangle$ coincide. Therefore $\langle f, h \rangle$  is discrete. Recall that by \cite{Beardon} the identify
\begin{equation} \label{eqn2}
\operatorname{tr} [f, hfh^{-1}] = \left( \operatorname{tr} [f, h] -2 \right) \left(  \operatorname{tr} [f,h] - \operatorname{tr}^{2} (f) + 2 \right) + 2 
\end{equation}
holds for any $f, h \in {\rm PSL} (2, \mathbb{C})$. Then using $\operatorname{tr}^{2} (f) = 2$, we get from \eqref{eqn2} that $$\operatorname{tr} [f,hfh^{-1}] = (\operatorname{tr} [f,h] -2) \operatorname{tr} [f,h] + 2.$$ Hence 
\begin{equation}
| \operatorname{tr} [f,hfh^{-1}] - 1 | = | \operatorname{tr} [f,h] - 1 |^{2}.
\end{equation}  

Consider the case (i) with  $h f h^{-1} = g$. Hence $$| \operatorname{tr} [f,hfh^{-1}] - 1 | = | \operatorname{tr} [f,g] - 1 | = {\mathcal G} (f,g) = 1.$$ Therefore,  ${\mathcal G} (f,h) = 1$. 

Consider the case (ii) with  $h f h^{-1} = f^{-1}$. Hence $$| \operatorname{tr} [f,hfh^{-1}] - 1 | = | \operatorname{tr} [f,f^{-1}] - 1 | = 1.$$ Therefore,  ${\mathcal G} (f,h) = 1$. 

Consider the case (iii) with  $h f h^{-1} = f g^{-1} f^{-1}$. Then 
$$| \operatorname{tr} [f,hfh^{-1}] - 1 | = | \operatorname{tr} [f,f g^{-1} f^{-1}] - 1 | = | \operatorname{tr} (f g^{-1} f^{-1} g) - 1|$$ 
$$ =  | \operatorname{tr} ( [f,g]^{-1}) - 1| = | \operatorname{tr} [f,g] - 1|  = 1.$$
 Here we used that for $\alpha  \in {\rm SL} (2, \mathbb{C})$ the relation $\operatorname{tr} (\alpha^{-1}) = \operatorname{tr} \alpha$ holds.  Therefore  ${\mathcal G} (f,h) = 1$. 
\end{proof}

In the next section we shall realize a method given by Lemma~\ref{lemma3}.   

\section{The figure-eight knot and related orbifolds}

Let us denote by $\mathcal F$ the figure-eight knot in the 3-sphere $S^{3}$ presented by its diagram in Fig.~\ref{fig1}. 
\begin{figure}[h]
\begin{center}
\unitlength=.4mm
\begin{picture}(0,90)(0,-5)
\thicklines
\qbezier(-20,10)(-20,10)(-30,10)
\qbezier(-30,10)(-40,10)(-40,20)
\qbezier(-40,20)(-40,20)(-40,70)
\qbezier(-40,70)(-40,80)(-30,80)
\qbezier(-30,80)(-30,80)(-20,80)
\qbezier(20,10)(20,10)(30,10)
\qbezier(30,10)(40,10)(40,20)
\qbezier(40,20)(40,20)(40,70)
\qbezier(40,70)(40,80)(30,80)
\qbezier(30,80)(30,80)(20,80)
\qbezier(-20,10)(0,20)(20,30)
\qbezier(20,10)(20,10)(4,18)
\qbezier(-20,30)(-20,30)(-4,22)
\qbezier(-20,30)(0,40)(20,50)
\qbezier(20,30)(20,30)(4,38)
\qbezier(-20,50)(-20,50)(-4,42)
\qbezier(-20,50)(-20,50)(-20,60)
\qbezier(20,50)(20,50)(20,60)
\qbezier(-20,60)(-20,60)(0,80)
\qbezier(-20,80)(-20,80)(-12,72)
\qbezier(0,60)(0,60)(-8,68)
\qbezier(0,60)(0,60)(20,80)
\qbezier(0,80)(0,80)(8,72)
\qbezier(20,60)(20,60)(12,68)
\put(-30,10){\vector(1,0){10}}
\put(30,10){\vector(-1,0){10}}
\put(-25,5){\makebox(0,0)[cc]{$b$}}
\put(25,5){\makebox(0,0)[cc]{$\rho$}}
\thinlines
\multiput(0,4)(0,8){11}{\line(0,1){4}}
\thicklines
\put(-5,5){\vector(1,0){10}}
\put(2,-2){\makebox(0,0)[cc]{$h_{1}$}}
\end{picture}
\end{center}
\caption{Generators of the group $\pi_{1} (S^{3} \setminus {\mathcal F})$.} \label{fig1}
\end{figure}
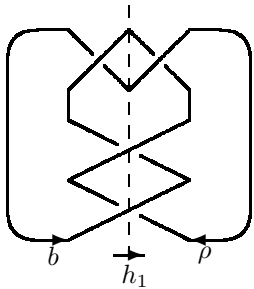
The knot group $\pi_{1}({S}^{3} \setminus {\mathcal F})$ can be easily found by the Wirtinger algorithm. Taking  generators $b$ and $\rho$, the corresponding loops are marked in Fig.~\ref{fig1}, we get 
\begin{equation*}
\pi_{1}({S}^{3} \setminus {\mathcal F}) \, = \, \langle \rho , b \, \mid \, \rho^{-1} \left[ b , \rho \right] \, = \, \left[ b , \rho \right] \, b \rangle ,
\end{equation*}
where $[b,\rho] = b \rho b^{-1} \rho^{-1}$. It is well-known \cite{Riley} that the group $\pi_{1} (S^{3} \setminus \mathcal F)$ has a faithful representation in ${\rm PSL}(2,\mathbb{C})$. 
 
Denote by $\mathcal F(n)$ the orbifold with the underlying space $S^{3}$ and singular set $\mathcal F$ with singularity index $n$, where $n \geq 4$. Cyclic $n$-fold coverings of $\mathcal F(n)$ are known as \emph{Fibonacci manifolds}, see \cite{Matveev-Petronio-Vesnin, Vesnin-Mednykh96} for their interesting properties. We call $\mathcal F(n)$ \emph{the figure-eight orbifold}. Denote its orbifold group by $\Gamma_{n}  = \pi^{\operatorname{orb}} \mathcal F(n)$. The group  $\Gamma_n$ has the following presentation: 
\begin{equation*}
\Gamma_n \, = \, \langle \rho_{n} , b_{n} \, \mid \, \, \rho_{n}^n \, = \, b_{n}^n \, = \, 1 , \quad \rho_{n}^{-1} \left[ b_{n} , \rho_{n} \right] \, = \, \left[ b_{n} , \rho_{n} \right] \, b_{n} \rangle ,
\end{equation*}
where generators $\rho_{n}$ and $b_{n}$ correspond to loops $\rho, b \in \pi_{1}({S}^{3} \setminus {\mathcal F})$. It is well known that for $n\geq 4$ the group $\Gamma_{n}$ has a faithful representation in ${\rm PSL}(2,\mathbb{C})$. According to~\cite{Mednykh-Rasskazov}, this representation is defined on generators by
\begin{equation} \label{genorb}
\rho_n = \begin{pmatrix}
\cos \frac{\pi}{n} & i e^{d_n / 2} \sin \frac{\pi}{n}\\
i e^{-d_n / 2} \sin \frac{\pi}{n} & \cos \frac{\pi}{n}\\
\end{pmatrix},
b_n = \begin{pmatrix}
\cos \frac{\pi}{n} & i e^{-d_n / 2} \sin \frac{\pi}{n}\\
i e^{d_n / 2} \sin \frac{\pi}{n} & \cos \frac{\pi}{n}\\
\end{pmatrix}.
\end{equation} 
The quantity $d_n$, defined as the complex distance between the axis of $f_{n}$ and the axis of $g_{n}$, is given by 
\begin{equation} \label{eq:dist1}
\cosh d_n = \frac{1}{4} \left( 1 + \cot^2(\pi / n) - i \sqrt{3 \cot^4(\pi / n) + 14 \cot^2(\pi / n) - 5} \right).
\end{equation}
The image of $\Gamma_{n}$ under this representation is a non-elementary discrete group. In what follows, we shall  not distinguish between the group $\Gamma_{n}$ and its image under the faithful representation.

GMT-numbers of the figure-eight knot group and of the figure-eight orbifold groups were studied in  \cite{Vesnin-Masley2}. It was shown that ${\mathcal G} \big( \pi_{1}({S}^{3} \setminus {\mathcal F}) \big)=3$ and the following result was obtained. 

\begin{thm}  [See \cite{Vesnin-Masley2}]  
Let $n\geq 4$. Then the following inequalities hold for the figure-eight orbifold groups:  
$$
1 \leq {\mathcal G}( \Gamma_{n} ) \leq 3 - 4 \sin^2 \frac{\pi}{n}.
$$
\end{thm}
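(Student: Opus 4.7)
The lower bound $1\leq \mathcal G(\Gamma_n)$ is essentially immediate from the Gehring--Martin--Tan inequality \eqref{eqn1}: since $\Gamma_n$ is a discrete non-elementary subgroup of $\operatorname{PSL}(2,\mathbb{C})$, every generating pair $(f,g)$ with $\operatorname{tr}[f,g]\neq 1$ satisfies $\mathcal G(f,g)\geq 1$, and taking the infimum over such pairs yields the claim.

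For the upper bound I would evaluate $\mathcal G$ on the explicit generators $\rho_n,b_n$ provided by \eqref{genorb}. The trace-of-$f$ summand is purely trigonometric: since $\operatorname{tr}(\rho_n)=2\cos(\pi/n)$,
$$|\operatorname{tr}^2(\rho_n)-2|=|4\cos^2(\pi/n)-2|=2\cos(2\pi/n)=2-4\sin^2(\pi/n),$$
where the absolute value disappears for $n\geq 4$ because $\cos(2\pi/n)\geq 0$.

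The commutator-trace summand requires more work. Using the Fricke identity
$$\operatorname{tr}[f,g]=\operatorname{tr}^2(f)+\operatorname{tr}^2(g)+\operatorname{tr}^2(fg)-\operatorname{tr}(f)\operatorname{tr}(g)\operatorname{tr}(fg)-2,$$
together with the direct matrix computation $\operatorname{tr}(\rho_n b_n)=2\cos^2(\pi/n)-2\sin^2(\pi/n)\cosh d_n$, I anticipate that after repeated use of $\sin^2(\pi/n)+\cos^2(\pi/n)=1$ the higher powers of $\cos(\pi/n)$ cancel and the expression collapses to the compact form $\operatorname{tr}[\rho_n,b_n]-1=2\sin^2(\pi/n)(\cosh d_n-1)$. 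Substituting formula \eqref{eq:dist1} for $\cosh d_n$ and computing the modulus squared, one real piece becomes $(1-4\sin^2(\pi/n))^2$ and one imaginary piece becomes $3\cos^4(\pi/n)+14\cos^2(\pi/n)\sin^2(\pi/n)-5\sin^4(\pi/n)$ (up to the common factor $1/4$); these sum to $4$ independently of $n$, forcing $|\operatorname{tr}[\rho_n,b_n]-1|=1$.

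Combining the two evaluations gives $\mathcal G(\rho_n,b_n)=(2-4\sin^2(\pi/n))+1=3-4\sin^2(\pi/n)$, and hence $\mathcal G(\Gamma_n)\leq 3-4\sin^2(\pi/n)$. The main obstacle is the algebraic collapse in the previous step: the reduction of the commutator trace to the neat form $2\sin^2(\pi/n)(\cosh d_n-1)$, followed by the cancellation of the discriminant appearing in \eqref{eq:dist1}, is special to the particular $d_n$ that realizes the figure-eight orbifold relation, and verifying it honestly requires careful bookkeeping through several rounds of trigonometric simplification.
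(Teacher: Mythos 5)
Your proposal is correct and takes essentially the route the paper indicates: the paper itself only cites this theorem from \cite{Vesnin-Masley2}, but its subsequent direct check of Corollary~\ref{col1} uses exactly your strategy (evaluate $\mathcal G$ on the explicit pair $(\rho_n,b_n)$ from \eqref{genorb}, with the commutator-trace modulus supplied by Lemma~1 of \cite{Vesnin-Masley2}, whose $\lambda=1$ case you rederive). I checked your computation: $\operatorname{tr}[\rho_n,b_n]-1=2\sin^2(\pi/n)(\cosh d_n-1)$ does hold for the specific $\cosh d_n$ of \eqref{eq:dist1}, and the claimed cancellation $(1-4\sin^2(\pi/n))^2+\bigl(3\cos^4(\pi/n)+14\cos^2(\pi/n)\sin^2(\pi/n)-5\sin^4(\pi/n)\bigr)=4$ is an identity, so $|\operatorname{tr}[\rho_n,b_n]-1|=1$ and the upper bound follows as you state.
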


By writing the above inequalities for $n=4$ we immediately  get the following:

\begin{cor}[See \cite{Vesnin-Masley2}] \label{col1}
The figure-eight orbifold group $\Gamma_{4}$ is a GMT-group. 
\end{cor}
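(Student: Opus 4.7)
The plan is to derive this as a one-line specialization of the preceding theorem. Setting $n=4$ in the inequality $1 \leq \mathcal{G}(\Gamma_n) \leq 3 - 4\sin^2(\pi/n)$, I would simply evaluate $\sin^2(\pi/4) = 1/2$, so the upper bound becomes $3 - 4 \cdot (1/2) = 1$. Both bounds thus coincide, forcing $\mathcal{G}(\Gamma_4) = 1$.

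To conclude that $\Gamma_4$ is a GMT-group in the strict sense of the definition --- namely that there exists a generating pair for which equality is actually attained, not merely approached as an infimum --- I would exhibit an explicit such pair. The natural candidates are $(\rho_4, b_4)$ from the faithful representation (\ref{genorb}). Since $\rho_4$ is elliptic of order four, $\operatorname{tr}^2(\rho_4) = 2$, so $|\operatorname{tr}^2(\rho_4) - 2| = 0$ and the Gehring--Martin--Tan quantity reduces to $|\operatorname{tr}[\rho_4, b_4] - 1|$. Substituting $n=4$ into (\ref{eq:dist1}) gives an explicit value of $\cosh d_4$, and inserting this together with the matrix formulas of (\ref{genorb}) into a direct computation of $[\rho_4, b_4]$ produces the desired trace. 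The same calculation performed for general $n$ is precisely what yields the upper bound $3 - 4\sin^2(\pi/n)$ in the preceding theorem, so specializing to $n = 4$ delivers $\mathcal{G}(\rho_4, b_4) = 1$ and exhibits the required generating pair.

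No serious obstacle stands in the way: the corollary is a formal consequence of the theorem once the arithmetic $3 - 4\sin^2(\pi/4) = 1$ is observed. The only point deserving a moment's care is the distinction between $\mathcal{G}(\Gamma_4) = 1$ as an infimum and the existence of a generating pair realising this value, but this is automatic, because the upper bound in the theorem is produced by evaluating $\mathcal{G}$ on the concrete generators $(\rho_n, b_n)$ rather than via a limiting argument.
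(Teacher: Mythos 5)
Your proposal is correct and follows essentially the same route as the paper: the corollary is read off from the preceding theorem at $n=4$ (where $3-4\sin^2(\pi/4)=1$), and the paper likewise backs this up with a direct verification on the explicit pair $(\rho_4,b_4)$, using $\operatorname{tr}^2(\rho_4)=2$ and the identity $|\operatorname{tr}[\rho_n,b_n]-1|=1$ (quoted from \cite{Vesnin-Masley2} rather than recomputed from \eqref{eq:dist1}, but to the same effect). Your care about the infimum versus an attained generating pair is exactly the point the paper's direct check settles.
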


This result can be checked directly. Indeed, by \eqref{genorb} we have $\operatorname{tr}^{2} (\rho_{n}) = 4 \cos^{2} (\pi/n)$, so $|{\rm tr}^2(\rho_{4}) - 2|=0$. Also, by \cite[Lemma~1]{Vesnin-Masley2} for any $\lambda \in \mathbb R$ we have 
$$
| {\operatorname{tr}} [\rho_{n}, b_{n}] - \lambda | = \sqrt{(\lambda^{2} - 3 \lambda + 3) + 4 (\lambda -1) \sin^{2} (\pi/n) }, 
$$
and this gives $ | {\operatorname{tr}} [\rho_{n}, b_{n}] - 1 | =1 $ for any $n$. Hence $\mathcal G (\rho_{n}, b_{n}) = 1$.

\section{Quotient orbifolds of the figure-eight orbifold}

\begin{lem} \label{lemma4}
The figure-eight orbifold group $\Gamma_{n}$, $n \geq 4$, has involutions of types (i), (ii), and (iii) from Lemma~\ref{lemma3}.  
\end{lem}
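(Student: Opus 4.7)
The plan is to exhibit, for each of the three cases, an explicit element $h\in{\rm PSL}(2,\mathbb{C})$ that is an involution normalizing $\Gamma_n$ and induces the stated conjugation on $\rho_n$ (here identified with $f$, and $b_n$ with $g$). All verifications reduce to short matrix calculations using the faithful representation~\eqref{genorb}.

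For case (i), inspection of \eqref{genorb} shows that $b_n$ is obtained from $\rho_n$ simply by interchanging its two off-diagonal entries (equivalently, by the substitution $d_n\mapsto -d_n$). This points to the anti-diagonal matrix
$$
h_1=\begin{pmatrix} 0 & i \\ i & 0 \end{pmatrix},
$$
which satisfies $h_1^{2}=-I$ (so is an involution in ${\rm PSL}(2,\mathbb{C})$) and, by a short row--column multiplication, $h_1\rho_n h_1^{-1}=b_n$; by the same symmetry $h_1 b_n h_1^{-1}=\rho_n$, so $h_1$ normalizes $\Gamma_n$.

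For case (ii), I would first locate the axis endpoints of $\rho_n$ by solving the fixed-point equation $\rho_n(z)=z$, obtaining $z=\pm e^{d_n/2}$. Any involution in ${\rm PSL}(2,\mathbb{C})$ swapping those two boundary points inverts $\rho_n$ under conjugation, and the simplest such map is $z\mapsto -z$, represented by
$$
h_2=\begin{pmatrix} i & 0 \\ 0 & -i \end{pmatrix}.
$$
Again $h_2^{2}=-I$ and a direct calculation yields $h_2\rho_n h_2^{-1}=\rho_n^{-1}$; the analogous calculation for $b_n$ (whose axis endpoints are $\pm e^{-d_n/2}$) gives $h_2 b_n h_2^{-1}=b_n^{-1}$, so $h_2$ normalizes $\Gamma_n$ as well.

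For case (iii), rather than searching for an involution directly, I would combine the previous two. The product $h_1h_2=\begin{pmatrix}0&1\\-1&0\end{pmatrix}$ again squares to $-I$, so the $\rho_n$-conjugate $h_3=\rho_n(h_1h_2)\rho_n^{-1}$ is itself an involution in ${\rm PSL}(2,\mathbb{C})$. A one-line algebraic computation using (i) and (ii) gives $(h_1h_2)\rho_n(h_1h_2)^{-1}=h_1\rho_n^{-1}h_1^{-1}=(h_1\rho_n h_1^{-1})^{-1}=b_n^{-1}$, whence $h_3\rho_n h_3^{-1}=\rho_n b_n^{-1}\rho_n^{-1}$. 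The only real (and rather mild) obstacle is guessing the two building blocks $h_1$ and $h_2$, but both are essentially forced by the manifest symmetries of \eqref{genorb} and by the fixed-point data of $\rho_n$; with these in hand, case (iii) is free and requires no further geometric input.
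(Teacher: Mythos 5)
Your proof is correct, but it reaches the three involutions by a different route than the paper. The paper's proof is group-theoretic and geometric: it quotes Dehn's and Magnus's determination of the outer automorphism group of the figure-eight knot group as the dihedral group $\langle\sigma,\tau\rangle$ of order eight, observes that $\sigma$, $\sigma\tau^{2}$ and a conjugate of $\tau^{2}$ act exactly as in cases (i), (ii), (iii) of Lemma~\ref{lemma3}, and then invokes the fact that $\mathcal F(n)$ is a finite-volume hyperbolic orbifold to realize these automorphisms by isometries $h_{1},h_{2},h_{3}\in\operatorname{Iso}(\mathbb H^{3})$ whose existence is asserted rather than exhibited. You instead produce the realizing elements explicitly: $h_{1}=\left(\begin{smallmatrix}0&i\\ i&0\end{smallmatrix}\right)$ (which the paper itself uses later, in the proof of Theorem~\ref{theorem:main}), $h_{2}=\left(\begin{smallmatrix}i&0\\ 0&-i\end{smallmatrix}\right)$, and $h_{3}=\rho_{n}(h_{1}h_{2})\rho_{n}^{-1}$; your computations check out (conjugation by $h_{1}$ swaps the off-diagonal entries since $\rho_n$ has equal diagonal entries, conjugation by $h_{2}$ negates them, $h_{1}h_{2}$ realizes $\tau^{2}$, and your $h_{3}$ gives $h_{3}\rho_{n}h_{3}^{-1}=\rho_{n}b_{n}^{-1}\rho_{n}^{-1}$, which is type (iii) with $f=\rho_{n}$, $g=b_{n}$, while the paper states (iii) with the roles of $\rho_n$ and $b_n$ exchanged --- immaterial here since both generators have the same trace). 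What your approach buys is self-containedness and verifiability: everything is a finite matrix computation, with no appeal to Dehn, Magnus, or rigidity, and it also makes visible that each $h_{i}$ normalizes $\Gamma_{n}$ (you check the action on both generators). What the paper's approach buys is context: it identifies the full symmetry group of $\Gamma_{n}$ and shows these three involutions exhaust, up to inner automorphisms and the relabelling $\rho_n\leftrightarrow b_n$, the available conjugation types, which your construction does not address (and does not need to for this lemma).
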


\begin{proof}
Being one of the simplest knots, the figure-eight knot has been intensively studied. In 1914 Dehn demonstrated \cite{Dehn} that the figure-eight group $\Gamma$ has eight outer automorphisms forming the dihedral group 
$$
\langle \sigma, \tau \, | \, \sigma^{2} = \tau^{4} = (\sigma \tau)^{2} = 1 \rangle, 
$$
where 
$$
\begin{cases} 
\sigma (\rho) = b, \cr
\sigma (b) = \rho,
\end{cases}
\qquad \text{and} \qquad
\begin{cases}
\tau (\rho) = \rho b \rho^{-1}, \cr
\tau (b) = b^{-1} \rho b.
\end{cases}
$$
Later, in 1931, Magnus proved \cite{Magnus} that $\Gamma$ has no other outer automorphisms. It is obvious that the actions 
$$
\begin{cases} 
\sigma (\rho_{n}) = b_{n}, \cr
\sigma (b_{n}) = \rho_{n},
\end{cases}
\qquad \text{and} \qquad
\begin{cases}
\tau (\rho_{n}) = \rho_{n} b_{n} \rho_{n}^{-1}, \cr
\tau (b_{n}) = b_{n}^{-1} \rho_{n} b_{n}.
\end{cases}
$$
are outer automorphisms of $\Gamma_{n}$ for any $n$. The action of the group $\langle \sigma, \tau \rangle$ by automorphisms on $\Gamma_{n}$ is presented in Table~\ref{tab1} (see also \cite{Vesnin-Rasskazov}).

\begin{table}[h]
\caption{Automorphism of $\Gamma_{n}$.}
\label{tab1}
\begin{tabular}{c||c|c|c|c|c|c|c}
& $\sigma$  & $\tau$ & $\tau^{2} $ &$\tau^{3}$ & $\sigma \tau$ & $\sigma \tau^{2}$ & $\sigma \tau^{3}$ \\ \hline \hline 
$\rho_{n}$ & $b_{n}$ & $\rho_{n} b_{n} \rho_{n}^{-1}$ & $b_{n}^{-1}$ & $b_{n}^{-1} \rho_{n}^{-1} b_{n}$ & $b_{n}^{-1} \rho_{n} b_{n}$ & $\rho_{n}^{-1}$ & $\rho_{n} b_{n}^{-1} \rho_{n}^{-1}$ \\ \hline
$b_{n}$ & $\rho_{n}$ & $b_{n}^{-1} \rho_{n} b_{n}$ &$ \rho_{n}^{-1}$ & $\rho_{n} b_{n}^{-1} \rho_{n}^{-1}$ & $\rho_{n} b_{n} \rho_{n}^{-1}$ & $b_{n}^{-1}$& $b_{n}^{-1} \rho_{n}^{-1} b_{n}$ \\ 
\end{tabular}
\end{table}

Since $\mathcal F(n)$, $n \geq 4$, is a hyperbolic 3-orbifold of finite volume (see for example \cite{Porti}). Note that there are two involutions acting as described in the statement of Lemma~\ref{lemma3}. For any $n$ there exists $h_{1} \in \operatorname{Iso} (\mathbb H^{3})$ such that 
$$
\sigma (\rho_{n}) = h_{1} \rho_{n} h_{1}^{-1} = b_{n}
$$ 
and $\sigma$ is of type (i) from Lemma~\ref{lemma3}.  For any $n$ there exists $h_{2} \in \operatorname{Iso} (\mathbb H^{3})$ such that 
$$
\sigma \tau^{2} (\rho_{n}) = h_{2} \rho_{n} h_{2}^{-1} = \rho_{n}^{-1}
$$
and involution $\sigma \tau^{2}$ is of type (ii).  For any $n$ there exists $h_{3} \in \operatorname{Iso} (\mathbb H^{3})$ which realizes $\tau^{2}$ with conjugation by $b_{n}$: 
$$
b_{n} (\tau^{2} (b_{n})) b_{n}^{-1} = h_{3} b_{n} h_{3}^{-1} = b_{n }\rho_{n}^{-1} b_{n}^{-1} 
$$
and involution $h_{3}$ is of type (iii).
\end{proof}

Let us define two orbifolds with a 3-sphere $S^{3}$ as the underlying space. Denote by ${\mathcal O}_{1}(n)$ the orbifold with singular set the spatial theta-graph presented in Fig.~\ref{fig2} with singularities $2$, $2$ and $n$ at its edges, as indicated in the figure. Denote by ${\mathcal O}_{2}(n)$ the orbifold with singular set the $2$-component link $6^{2}_{2}$ with singularities $2$ and $n$ at its components, as presented in Fig.~\ref{fig2}. 
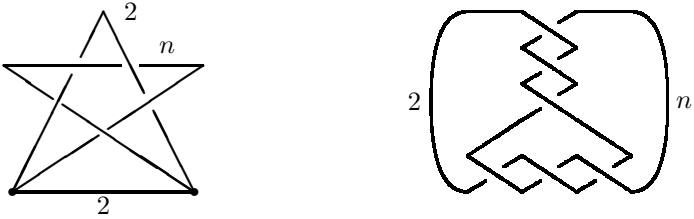
\begin{figure}[h] 
\begin{center}
\unitlength=.24mm
\begin{picture}(0,120)(0,10)
\put(120,0){ \begin{picture}(0,130)
\thicklines
\qbezier(-45,20)(-45,20)(-35,26)
\qbezier(-45,40)(-45,40)(-15,20)
\qbezier(-15,40)(-15,40)(-25,34)
\qbezier(-15,20)(-15,20)(-5,26)
\qbezier(-15,40)(-15,40)(15,20)
\qbezier(15,40)(15,40)(5,34)
\qbezier(15,20)(15,20)(25,26)
\qbezier(15,40)(15,40)(45,20)
\qbezier(45,40)(45,40)(35,34)
\qbezier(-15,100)(-15,100)(-5,106)
\qbezier(-15,120)(-15,120)(15,100)
\qbezier(15,120)(15,120)(5,114)
\qbezier(-15,80)(-15,80)(-5,86)
\qbezier(-15,100)(-15,100)(15,80)
\qbezier(15,100)(15,100)(5,94)
\qbezier(-15,60)(-15,60)(-5,66)
\qbezier(-15,80)(-15,80)(15,60)
\qbezier(15,80)(15,80)(5,74)
\qbezier(-15,60)(-15,60)(-45,40)
\qbezier(15,60)(15,60)(45,40)
\qbezier(15,120)(15,120)(45,120)
\qbezier(45,120)(65,120)(65,70)
\qbezier(45,20)(65,20)(65,70)
\qbezier(-15,120)(-15,120)(-45,120)
\qbezier(-45,120)(-65,120)(-65,70)
\qbezier(-45,20)(-65,20)(-65,70)

\put(-74,70){\makebox(0,0)[cc]{$2$}}
\put(74,70){\makebox(0,0)[cc]{$n$}}
\end{picture}}
\put(-120,10){\unitlength=.12mm
\begin{picture}(0,240)(0,-120)
\thicklines
\put(   0, 100){\line(-1,-2){25}}
\put(-100,-100){\line( 1, 2){65}}
\put(-100,-100){\line( 1, 2){25}}
\put(   0, 100){\line( 1,-2){45}}
\put( 100,-100){\line(-1, 2){45}}
\put( 110,   40){\line(-3,-2){105}}
\put(-100, -100){\line( 3, 2){95}}
\put(-110,  40){\line( 3,-2){ 55}}
\put( 100,-100){\line(-3, 2){145}}
\put( 100,-100){\line(-3, 2){70}}
\put(-110,  40){\line( 1, 0){130}}
\put( 110,  40){\line(-1, 0){70}}
\put(-110, 40){\line(1, 0){60}}
\put( 100,-100){\circle*{8}}
\put(-100,-100){\circle*{8}}
\put(-100,-100){\line(1, 0){200}}
\put(30,100){\makebox(0,0)[cc]{$2$}}
\put(0,-115){\makebox(0,0)[cc]{$2$}}
\put(70,60){\makebox(0,0)[cc]{$n$}}
\end{picture}}
\end{picture}
\caption{Singular sets of ${\mathcal O}_{1}(n)$ and ${\mathcal O}_{2}(n)$.}  \label{fig2}
\end{center}
\end{figure} 

\begin{thm} \label{theorem:main}
The orbifold group $\pi^{\operatorname{orb}} \mathcal O_{1}(4)$ is a GMT-group. 
\end{thm}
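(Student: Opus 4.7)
The plan is to realize $\pi^{\operatorname{orb}}\mathcal O_1(4)$ as the index-two extension of the known GMT-group $\Gamma_4$ (Corollary~\ref{col1}) by an involution of type (i) from Lemma~\ref{lemma3}. Concretely, $\Gamma_4 = \langle \rho_4, b_4\rangle$ is a GMT-group with $\rho_4$ elliptic of order $4$, and Lemma~\ref{lemma4} produces an involution $h_1 \in \operatorname{Iso}(\mathbb H^3)$ with $h_1 \rho_4 h_1^{-1} = b_4$. Since $b_4 \in \langle \rho_4, h_1\rangle$, the subgroup $\Gamma_4$ sits inside $\langle \rho_4, h_1\rangle = \langle \rho_4, b_4, h_1\rangle$ with index at most two, and case (i) of Lemma~\ref{lemma3} then yields that $\langle \rho_4, h_1\rangle$ is itself a GMT-group.

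It remains to identify $\langle \rho_4, h_1\rangle$ with $\pi^{\operatorname{orb}}\mathcal O_1(4)$. Geometrically, $h_1$ realizes a $\pi$-rotation of $S^3$ whose fixed locus is an unknotted circle $A \subset S^3$ meeting $\mathcal F$ transversely in two points $P_1$ and $P_2$; this is the evident involutive symmetry of the standard figure-eight diagram that exchanges the strands marked $\rho$ and $b$ in Fig.~\ref{fig1}. Quotienting $\mathcal F(4)$ by $\langle h_1\rangle$ yields an orbifold whose underlying space is $S^3$ (by the Smith conjecture) and whose singular set consists of the image of $\mathcal F$, folded into a single arc from $P_1$ to $P_2$ of singularity index $4$, together with the image of $A$, which contributes two arcs from $P_1$ to $P_2$ of singularity index $2$ (since $h_1$ fixes $A$ pointwise). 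These three arcs form a theta-graph with edge labels $(2,2,4)$ --- precisely the singular set of $\mathcal O_1(4)$. Hence $\mathcal O_1(4) = \mathcal F(4)/\langle h_1\rangle$ and $\pi^{\operatorname{orb}}\mathcal O_1(4) = \langle \rho_4, h_1\rangle$.

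The main obstacle is the geometric identification in the second paragraph: one must verify that the abstractly-defined involution $h_1$ of Lemma~\ref{lemma4} coincides, up to conjugacy in $\operatorname{Iso}(\mathbb H^3)$, with the rotational symmetry of the concrete figure-eight diagram, and confirm that the resulting quotient carries precisely the stated theta-graph singular set with the correct labels. Once this geometric picture is in place, the GMT property follows immediately from Lemma~\ref{lemma3}, and the argument extends verbatim to the other two involution types, setting up the analogous quotients treated in the next paragraphs.
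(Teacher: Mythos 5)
Your proposal is correct and follows essentially the same route as the paper: realize $\pi^{\operatorname{orb}}\mathcal O_{1}(4)$ as the extension of $\Gamma_{4}$ by the type (i) involution $h_{1}$ from Lemma~\ref{lemma4}, identify the quotient $\mathcal F(4)/h_{1}$ with the theta-graph orbifold with labels $(2,2,4)$, and apply Corollary~\ref{col1} together with case (i) of Lemma~\ref{lemma3}. The paper additionally writes down the explicit matrix $h_{1}$ and a presentation of the extension $\Delta_{n}$, but the logical structure is the same.
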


\begin{proof}
For a fixed $n$ consider an involution $h_{1} \in  \operatorname{Iso} (\mathbb H^{3})$ from the proof of Lemma~\ref{lemma4} such that $h_{1} \rho_{n} h_{1}^{-1} = b_{n}$ and $h_{1} b_{n} h_{1}^{-1} = \rho_{n}$. For $\rho_{n}$ and $b_{n}$ given by (\ref{genorb}) we have
 $h_{1} = \begin{pmatrix} 0 & i \cr i & 0 \end{pmatrix}$.
 An extension of $\Gamma_{n}$ by $h_{1}$ has the following presentation: 
\begin{equation}
\Delta_n  = \langle \rho_{n} , b_{n} , h_{1}   \mid \rho_{n}^n  =  b_{n}^n  =  h_{1}^2  =  1 , \, \, \rho_{n}^{-1} \left[ b_{n} , \rho_{n} \right] = \left[ b_{n} , \rho_{n} \right]  b_{n} , \, \, h_{1} \rho_{n} h_{1}^{-1} = b_{n} \rangle . \label{eqn4}
\end{equation}
It is easy to see that the conjugation by $h_{1}$ is induced by an involution of $S^{3}$ whose axis corresponds to the dotted line in Fig.~\ref{fig1} and intersects the figure-eight knot $\mathcal F$ in two points. This symmetry induces an isometry, also denoted by $h_{1}$, of the orbifold  ${\mathcal F}(n)$. 

The quotient space ${\mathcal F}(n) / h_{1}$ has $S^{3}$ as its underlying space. Its singular set is a spatial graph with two vertices presented by a diagram in Fig.~\ref{fig3}. 
\begin{figure}[h]  
\begin{center}
\unitlength=.32mm \thicklines
\begin{picture}(90,130)(0,20)
\put(10,30){\vector(1,0){20}}
\put(10,40){\oval(20,20)[lb]}
\put(30,40){\oval(20,20)[rb]}
\put(0,40){\line(0,1){100}}
\put(10,140){\oval(20,20)[lt]}
\put(30,140){\oval(20,20)[rt]}
\put(10,150){\line(1,0){20}}
\put(40,40){\line(0,1){15}}
\put(40,65){\line(0,1){30}}
\put(40,105){\line(0,1){35}}
\put(30,40){\line(1,0){5}}
\put(30,50){\oval(20,20)[l]}
\put(30,60){\vector(1,0){20}}
\put(50,70){\oval(20,20)[r]}
\put(50,80){\line(-1,0){5}}
\put(35,80){\line(-1,0){5}}
\put(30,90){\oval(20,20)[l]}
\put(30,100){\line(1,0){30}}
\put(60,110){\oval(20,20)[rb]}
\put(70,110){\line(0,1){10}}
\put(40,140){\line(1,0){10}}
\put(40,120){\line(1,0){10}}
\put(50,120){\line(1,1){20}}
\qbezier(50,140)(50,140)(58,132)
\qbezier(70,120)(70,120)(62,128)
\put(70,140){\line(1,0){10}}
\put(80,130){\oval(20,20)[rt]}
\put(90,130){\line(0,-1){80}}
\put(80,50){\oval(20,20)[rb]}
\put(80,40){\vector(-1,0){20}}
\put(60,40){\line(-1,0){15}}
\put(30,140){\makebox(0,0)[cc]{$A$}}
\put(30,120){\makebox(0,0)[cc]{$B$}}
\put(25,20){\makebox(0,0)[cc]{$h_{1}$}}
\put(65,30){\makebox(0,0)[cc]{$\rho$}}
\put(52,52){\makebox(0,0)[cc]{$b$}}
\put(44,130){\makebox(0,0)[cc]{$2$}}
\put(-8,90){\makebox(0,0)[cc]{$2$}}
\put(98,90){\makebox(0,0)[cc]{$n$}}
\put(40,120){\circle*{3}}
\put(40,140){\circle*{3}}
\end{picture}
\end{center}
\caption{Singular set of $\mathcal F(n) / h_{1}$.} \label{fig3} 
\end{figure} 
This graph can be described as the torus knot $5_1$ with a tunnel $AB$. Points $A$ and $B$ are the images of  intersection points of the singular set of ${\mathcal F}(n)$ with the axis of the involution $h_{1}$. Two edges of this graph, which are images of the axis of $h_{1}$, have singularity index $2$, and the third edge, which is the image of the singular set of ${\mathcal F}(n)$, has singularity index~$n$.

It can be checked directly (see for example, \cite{Vesnin-Rasskazov} ) that the orbifold group of $\mathcal F(n) / h_{1}$ is isomorphic to $\Delta_{n}$ with generators $\rho$, $b$ and $h_1$, as pictured in Figure~\ref{fig3}. Indeed, the relations \eqref{eqn4} hold by the Wirtinger algorithm, in particular, the relation $\rho_{n}^{-1} \, \left[  b_{n} , \rho_{n} \right] \, = \, \left[ b_{n} , \rho_{n} \right] \, b_{n}$ is a consequence of the fact that the loop around its unknotting tunnel $AB$ (see \cite{Marimoto-Sakuma-Yokota} about unknotting tunnels) is an element of order two in the orbifold group.  Obviously, spatial theta-graphs presented in diagrams in Figure~\ref{fig2} (left) and Figure~\ref{fig3} are equivalent, so $\pi^{\operatorname{orb}} \mathcal O_{1}(n) = \Delta(n)$. Eliminating $b_{n}$ from (\ref{eqn4}) we get that $\Delta_{n}$ is a two-generated group with generators $\rho_{n}$ and $h_{1}$.  

Suppose that $n=4$. By Corollary~\ref{col1}, $\Gamma_{4}$ is a GMT-group and the pair $\Gamma_{4}$ and $h_{1}$ satisfies the case (i) of Lemma~\ref{lemma3}. Hence $\pi^{\operatorname{orb}} \mathcal O_{1}(4)$ is a GMT-group. \end{proof}

\begin{thm} \label{theorem:main}
The orbifold group $\pi^{\operatorname{orb}} \mathcal O_{2}(4)$ is a GMT-group. 
\end{thm}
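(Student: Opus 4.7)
The plan is to proceed along exactly the same lines as the proof of Theorem~3.2, using this time the involution $h_3$ from Lemma~\ref{lemma4} in place of $h_1$. Since the singular set of $\mathcal O_2(n)$ is a two-component link rather than a spatial graph, the relevant involution of $\mathcal F(n)$ must have its fixed axis disjoint from the figure-eight knot; geometrically, $h_3$ realizes the outer automorphism $\tau^2$ that exchanges the two lobes of the figure-eight, which is the natural candidate.

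In Lemma~\ref{lemma3} I would take $f = b_n$ (which also has order $4$ when $n = 4$) and $g = \rho_n$. By the symmetry of $\mathcal G(\cdot,\cdot)$ in its two arguments and the identity $\operatorname{tr}^2(b_4) = 2$, the pair $(b_4,\rho_4)$ again witnesses that $\Gamma_4$ is a GMT-group; meanwhile the relation $h_3 b_n h_3^{-1} = b_n \rho_n^{-1} b_n^{-1}$ produced in the proof of Lemma~\ref{lemma4} is precisely case~(iii) of Lemma~\ref{lemma3} in the form $h f h^{-1} = f g^{-1} f^{-1}$. The extension
$$
\widetilde\Delta_n = \langle \rho_n, b_n, h_3 \mid \rho_n^n = b_n^n = h_3^2 = 1,\ \rho_n^{-1}[b_n,\rho_n] = [b_n,\rho_n] b_n,\ h_3 b_n h_3^{-1} = b_n \rho_n^{-1} b_n^{-1}\rangle
$$
is then two-generated by $b_n$ and $h_3$, since the last relation lets us express $\rho_n$ in terms of these two.

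The next step is to identify $\mathcal F(n)/h_3$ with $\mathcal O_2(n)$: the quotient has $S^3$ as its underlying space, and since the fixed axis of $h_3$ does not meet $\mathcal F$, the singular set is the disjoint union of the image of the figure-eight knot (with singularity index $n$) and of the fixed axis (with singularity index $2$). A diagram analysis analogous to the passage from Figure~\ref{fig1} to Figure~\ref{fig3} in the proof of Theorem~3.2, followed by a few Reidemeister moves, should put this link into the standard form of $6^2_2$ as drawn in Figure~\ref{fig2}, giving $\pi^{\operatorname{orb}}\mathcal O_2(n) = \widetilde\Delta_n$. Specializing to $n = 4$, Corollary~\ref{col1} together with case~(iii) of Lemma~\ref{lemma3} then yields $\mathcal G(b_4, h_3) = 1$, which is the claim.

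The main obstacle is the topological step of showing that the quotient link really is $6^2_2$ with the stated labelling. This is a concrete but somewhat intricate diagrammatic verification, requiring one to draw the rotation axis of $h_3$ explicitly on a planar projection of $\mathcal F$ and to simplify the resulting quotient diagram; the algebraic step provided by Lemma~\ref{lemma3} is, by contrast, already handled by the trace computation performed in the proof of that lemma.
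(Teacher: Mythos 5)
Your proposal is correct and follows essentially the same route as the paper: the involution $h_{3}$ with $h_{3}b_{n}h_{3}^{-1}=b_{n}\rho_{n}^{-1}b_{n}^{-1}$, case~(iii) of Lemma~\ref{lemma3} applied to the pair $(b_{4},\rho_{4})$, and the identification of $\mathcal F(n)/h_{3}$ with $\mathcal O_{2}(n)$. The only (minor) divergence is that you propose verifying this identification diagrammatically, whereas the paper does it algebraically by eliminating $\rho_{n}$ via $\rho_{n}=b_{n}^{-1}h_{3}b_{n}^{-1}h_{3}b_{n}$ and recognizing the resulting relation as the canonical two-bridge presentation of the link $10/3=6^{2}_{2}$; you also make explicit the observation, implicit in the paper, that $\mathcal G(b_{4},\rho_{4})=\mathcal G(\rho_{4},b_{4})=1$ since both generators have order four.
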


\begin{proof}
To see the symmetry $h_{3}$ we shall redraw the singular set of the orbifold  ${\mathcal F}(n)$ as in Figure~\ref{fig4}.  Denote $\lambda = b \rho b^{-1}$, see Figure~\ref{fig4}. 
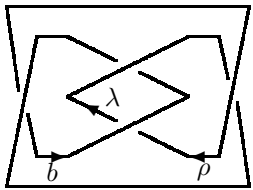
\begin{figure}[h]
\begin{center}
\unitlength=.4mm
\begin{picture}(0,55)(0,5)
\thicklines
\qbezier(-20,10)(0,20)(20,30)
\qbezier(20,10)(20,10)(4,18)
\qbezier(-20,30)(-20,30)(-4,22)
\qbezier(-20,30)(0,40)(20,50)
\qbezier(20,30)(20,30)(4,38)
\qbezier(-20,50)(-20,50)(-4,42)
\put(-30,10){\vector(1,0){10}}
\put(30,10){\vector(-1,0){10}}
\put(-25,5){\makebox(0,0)[cc]{$b$}}
\put(25,5){\makebox(0,0)[cc]{$\rho$}}
\qbezier(30,10)(30,10)(40,60)
\qbezier(20,50)(20,50)(30,50) 
\qbezier(-20,50)(-20,50)(-30,50)
\qbezier(30,50)(30,50)(33,36)
\qbezier(36,28)(36,28)(40,0)
\qbezier(-30,50)(-30,50)(-40,0)
\qbezier(-30,10)(-30,10)(-33,24)
\qbezier(-40,60)(-40,60)(-36,32)
\qbezier(-40,60)(-40,60)(40,60)
\qbezier(-40,0)(-40,0)(40,0)
\put(-5,30){\makebox(0,0)[cc]{$\lambda$}}
\put(-5,22.5){\vector(-2,1){10}}
\end{picture}
\end{center}
\caption{Singular set of the orbifold ${\mathcal F}(n)$.} \label{fig4}
\end{figure}
It is easy to see that $h_{3}$ corresponds to such rotational symmetry of order two that  $b$ goes to $\lambda^{-1}$ and $\lambda$ goes to $b^{-1}$. Therefore, $h_{3} b h_{3}^{-1} = b \rho^{-1} b^{-1}$ that corresponds to the case (iii) of Lemma~\ref{lemma3}. Using $\rho = b^{-1} h_{3} b^{-1} h_{3} b$ from the defining relation $\rho^{-1} [b, \rho] = [b, \rho] b$ we get the relation 
$$
b h_{3} b h_{3} b^{-1} h_{3} b^{-1} h_{3} b h_{3} = h_{3} b h_{3} b^{-1} h_{3} b^{-1} h_{3} b h_{3} b 
$$
that corresponds to the canonical defining relation of the two-generated fundamental group of the two-bridge  link $10/3$ pictured in Figure~\ref{fig2} on the right (see also \cite{Vesnin-Rasskazov}) . Thus, the group generated by $\rho_{n}$, $b_{n}$ and $h_{3}$ is the orbifold group $\pi^{\operatorname{orb}} \mathcal O_{2}(4)$.  Suppose that $n=4$. By Corollary~\ref{col1}, $\Gamma_{4}$ is a GMT-group and the pair $\Gamma_{4}$ and $h_{3}$ satisfies the case (iii) of Lemma~\ref{lemma3}. Hence $\pi^{\operatorname{orb}} \mathcal O_{2}(n)$ is a GMT-group. 
\end{proof}

\section{More examples of GMT-groups}

In this section we shall give two more examples of GMT-groups. Let us denote the orbifolds with the singular set presented in Fig.~\ref{fig4}
by $\mathcal O_{3}$ and $\mathcal O_{4}$. The singular set of $\mathcal O_{3}$ is a spatial graph with two vertices that can be described as a Hopf link with an unknotting tunnel. The singular set of $\mathcal O_{4}$ can be described as a double link with an unknotting tunnel. Singularity indices are presented in Fig.~\ref{fig4}.
\begin{figure}[h]  
\begin{center}
\unitlength=.4mm 
\begin{picture}(0,100)(0,0)
\put(-60,20){\begin{picture}(0,60)
\thicklines
\qbezier(-10,10)(-10,10)(10,30)
\qbezier(-10,30)(-10,30)(10,50)
\qbezier(10,10)(10,10)(3,17)
\qbezier(-10,30)(-10,30)(-3,23)
\qbezier(10,30)(10,30)(3,37)
\qbezier(-10,50)(-10,50)(-3,43)
\qbezier(-10,10)(-10,10)(-20,10)
\qbezier(-20,10)(-30,10)(-30,20)
\qbezier(-30,20)(-30,20)(-30,40)
\qbezier(-30,40)(-30,50)(-20,50)
\qbezier(-20,50)(-20,50)(-10,50)
\qbezier(10,10)(10,10)(20,10)
\qbezier(20,10)(30,10)(30,20)
\qbezier(30,20)(30,20)(30,40)
\qbezier(30,40)(30,50)(20,50)
\qbezier(20,50)(20,50)(10,50)
\put(-10,50){\circle*{3}}
\put(10,50){\circle*{3}}
\qbezier(-10,50)(0,70)(10,50)
\put(36,30){\makebox(0,0)[cc]{$4$}}
\put(-36,30){\makebox(0,0)[cc]{$4$}}
\put(10,60){\makebox(0,0)[cc]{$2$}}
\put(-20,10){\vector(1,0){10}}
\put(-15,5){\makebox(0,0)[cc]{$a$}}
\put(20,10){\vector(-1,0){10}}
\put(15,5){\makebox(0,0)[cc]{$b$}}
\end{picture}}
\put(60,0){\begin{picture}(0,100)
\thicklines
\qbezier(-10,10)(-10,10)(10,30)
\qbezier(-10,30)(-10,30)(10,50)
\qbezier(-10,50)(-10,50)(10,70)
\qbezier(-10,70)(-10,70)(10,90)
\qbezier(10,10)(10,10)(3,17)
\qbezier(-10,30)(-10,30)(-3,23)
\qbezier(10,30)(10,30)(3,37)
\qbezier(-10,50)(-10,50)(-3,43)
\qbezier(10,50)(10,50)(3,57)
\qbezier(-10,70)(-10,70)(-3,63)
\qbezier(10,70)(10,70)(3,77)
\qbezier(-10,90)(-10,90)(-3,83)
\qbezier(-10,10)(-10,10)(-20,10)
\qbezier(-20,10)(-30,10)(-30,20)
\qbezier(-30,20)(-30,20)(-30,80)
\qbezier(-30,80)(-30,90)(-20,90)
\qbezier(-20,90)(-20,90)(-10,90)
\qbezier(10,10)(10,10)(20,10)
\qbezier(20,10)(30,10)(30,20)
\qbezier(30,20)(30,20)(30,80)
\qbezier(30,80)(30,90)(20,90)
\qbezier(20,90)(20,90)(10,90)
\put(-10,90){\circle*{3}}
\put(10,90){\circle*{3}}
\qbezier(-10,90)(0,110)(10,90)
\put(36,50){\makebox(0,0)[cc]{$2$}}
\put(-36,50){\makebox(0,0)[cc]{$4$}}
\put(10,100){\makebox(0,0)[cc]{$2$}}
\put(-20,10){\vector(1,0){10}}
\put(-15,5){\makebox(0,0)[cc]{$a$}}
\put(20,10){\vector(-1,0){10}}
\put(15,5){\makebox(0,0)[cc]{$p$}}
\end{picture}}
\end{picture}
\end{center}
\caption{Singular sets of orbifols $\mathcal O_{3}$ and $\mathcal O_{4}$.} \label{fig4} 
\end{figure}
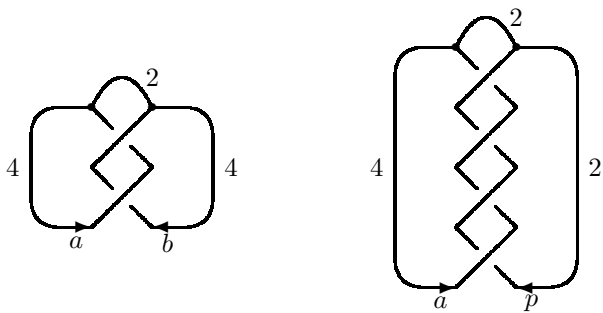 
Both singular vertices of $\mathcal O_{3}$ belong to $\partial \mathbb H^{3}$. One of singular vertices of $\mathcal O_{4}$ belongs to $\partial \mathbb H^{3}$, whereas the other one lies in $\mathbb H^{3}$.

\begin{thm}
Orbifold groups $\pi^{\operatorname{orb}} {\mathcal O}_{3}$ and $\pi^{\operatorname{orb}} {\mathcal O}_{4}$ are GMT-groups. 
\end{thm}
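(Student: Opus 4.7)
The plan is to follow the pattern of the two theorems of Section~3: for each of $\mathcal{O}_3$ and $\mathcal{O}_4$, I would realize the orbifold group as a two-generator extension of the GMT-group $\Gamma_4$ by a suitable involution coming from Lemma~\ref{lemma4}, and then invoke Lemma~\ref{lemma3} exactly as before. Since the involutions $h_1$ and $h_3$ have already been used to handle $\mathcal{O}_1(4)$ and $\mathcal{O}_2(4)$ via cases (i) and (iii) of Lemma~\ref{lemma3}, the remaining involution $h_2$ of type (ii) is the natural candidate for one of the two new orbifolds.

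For $\mathcal{O}_3$, I would work with $h_2 \in \operatorname{Iso}(\mathbb{H}^3)$, which satisfies $h_2 \rho_4 h_2^{-1} = \rho_4^{-1}$ and $h_2 b_4 h_2^{-1} = b_4^{-1}$ (by Table~\ref{tab1}). The fact that both singular vertices of $\mathcal{O}_3$ lie on $\partial \mathbb{H}^3$ suggests that the fixed axis of $h_2$ meets $\mathcal{F}$ only at the ideal boundary; then in $\mathcal{F}(4)/h_2$ the image of $\mathcal{F}$ becomes a pair of index-$4$ arcs forming the Hopf link, and the image of the axis becomes the index-$2$ unknotting tunnel joining the two cusps. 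A Wirtinger computation on this quotient graph presents the group as $\langle \rho_4, b_4, h_2 \rangle$, and eliminating $b_4$ via the $h_2$-conjugation exhibits it as two-generated by $\rho_4$ and $h_2$. Applying Lemma~\ref{lemma3}(ii) to the pair $(\rho_4, h_2)$ then delivers $\mathcal{G}(\rho_4, h_2) = 1$.

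For $\mathcal{O}_4$ the plan is the same, but with an involution whose fixed axis passes through one interior point of $\mathbb{H}^3$ (accounting for the single interior singular vertex noted in the paper) as well as one cusp on $\partial \mathbb{H}^3$. Such an involution cuts an index-$4$ arc of $\mathcal{F}$ into an index-$4$ and an index-$2$ edge in the quotient, explaining the singularity data $(4,2,2)$ and the double-link-with-tunnel diagram of $\mathcal{O}_4$. Once the orbifold group is identified as a two-generator extension of $\Gamma_4$ by this involution, the corresponding case of Lemma~\ref{lemma3} gives the GMT-equality.

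The main obstacle I expect is the geometric bookkeeping rather than any trace calculation: locating each involution axis correctly with respect to $\mathcal{F}$ in $S^3$, matching the quotient singular graph with the diagrams in Figure~\ref{fig4}, and checking in each case that after adjoining the involution the group remains two-generated so that Lemma~\ref{lemma3} can be applied as written. Once this step is in place, the trace identity~\eqref{eqn2} together with Corollary~\ref{col1} reduces the GMT-verification to the same one-line calculation that appeared in the proofs of Section~3.
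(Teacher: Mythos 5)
There is a genuine gap: your entire plan rests on realizing $\mathcal{O}_{3}$ and $\mathcal{O}_{4}$ as involution quotients of $\mathcal{F}(4)$, and that is impossible. The orbifold $\mathcal{F}(4)$ is closed (its singular set is a circle, with no vertices), so any quotient $\mathcal{F}(4)/h$ is again compact; but $\mathcal{O}_{3}$ and $\mathcal{O}_{4}$ are cusped, since a vertex where edges of indices $4,4,2$ meet carries the Euclidean triangle group and therefore lies on $\partial\mathbb{H}^{3}$, exactly as the paper notes. Equivalently, $\Gamma_{4}$ is a cocompact lattice with no parabolic elements, so it cannot be an index-two subgroup of $\pi^{\operatorname{orb}}\mathcal{O}_{3}$ or $\pi^{\operatorname{orb}}\mathcal{O}_{4}$, which contain parabolics (indeed the paper advertises them as subgroups of the Picard group, which $\Gamma_{4}$ is not). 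Your geometric picture also fails on its own terms: the image of the single circle $\mathcal{F}$ under a two-fold quotient is connected, so it can never become the two-component Hopf link. The quotient $\mathcal{F}(4)/h_{2}$ by the strong inversion is yet another theta-curve orbifold with indices $2,2,4$ and spherical $(2,2,4)$ vertices, not $\mathcal{O}_{3}$.

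The paper's proof is of a different and much more direct nature. For $\mathcal{O}_{3}$ one reads off the presentation $\langle a,b \mid a^{4}=b^{4}=1,\ [a,b]^{2}=1\rangle$ and its faithful discrete representation: $a$ elliptic of order four gives $\operatorname{tr}^{2}(a)=2$, and $[a,b]$ of order two gives $\operatorname{tr}[a,b]=0$, whence $\mathcal{G}(a,b)=|2-2|+|0-1|=1$ with no reference to $\Gamma_{4}$ at all. For $\mathcal{O}_{4}$ the relevant base GMT-group is $\pi^{\operatorname{orb}}\mathcal{O}_{3}$ itself, not $\Gamma_{4}$: the order-two symmetry $p$ of the singular set of $\mathcal{O}_{3}$ exchanging $a$ and $b$ satisfies case (i) of Lemma~\ref{lemma3} (which only requires the first generator to be elliptic of order four, a hypothesis $a$ meets), and $\mathcal{O}_{4}=\mathcal{O}_{3}/p$. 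If you want to salvage your strategy, you must first establish a correct GMT-group to extend --- here that means proving $\mathcal{G}\bigl(\pi^{\operatorname{orb}}\mathcal{O}_{3}\bigr)=1$ directly --- before any appeal to Lemma~\ref{lemma3}.
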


\begin{proof}
The orbifold group of $\pi^{\operatorname{orb}} {\mathcal O}_{3}$ has the following presentation: 
$$
\pi^{\operatorname{orb}} {\mathcal O}_{3} = \langle a, b \, | \, a^{4} =b^{4} = 1, \quad [a,b]^2 = 1 \rangle , 
$$
where a relation $[a,b]^{2}=1$ corresponds to a loop around a tunnel. Concerning the hyperbolicity of this orbifold see for example, \cite{Klimenko-Kopteva}. Let us use letters $a$ and $b$ also for images of generators in the group $\operatorname{Iso} (\mathbb H^{3})$, corresponding to a faithful representation. Then $\operatorname{tr}^{2} (a) = 2$ and $\operatorname{tr} [a,b] = 0$. Hence $\pi^{\operatorname{orb}} {\mathcal O}_{3}$ is a GMT-group.

It can be seen from Fig.~\ref{fig4} that the singular set of $\mathcal O_{3}$ has a symmetry of order two that exchanges $a$ and $b$. This symmetry induces an involution $\tau$ of $\pi^{\operatorname{orb}} \mathcal O_{3}$ defined by 
$$
\tau (a) = p a p^{-1} = b \quad \text{and} \quad \tau(b) = p b p^{-1} = a
$$
for some $p \in \operatorname{Iso} (\mathbb H^{3})$. It is easy to verify that the quotient orbifold ${\mathcal O}_{3} / p$ is isometric to ${\mathcal O}_{4}$. Moreover, $\pi^{\operatorname{orb}} \mathcal O_{4}$ is two-generated with  generators $a$, $p$, and $p$ satisfies the case (i) of Lemma~\ref{lemma3}. Hence, $\pi^{\operatorname{orb}} \mathcal O_{4}$ is a GMT-group. It has the following presentation: 
$$
\pi^{\operatorname{orb}} {\mathcal O}_{4} = \langle a, p \, | \, a^{4} =p^{2} = 1, \quad (apapa^{-1}pa^{-1}p)^2 = 1 \rangle . \eqno\qedhere 
$$
\end{proof}



\end{document}